\documentclass[12pt]{amsart}

\usepackage{amsmath,amsthm,amssymb}
\usepackage[top=30truemm,bottom=30truemm,left=30truemm,right=30truemm]{geometry}
\usepackage{hyperref}
\usepackage[all]{xy}
\usepackage[dvipsnames]{xcolor}

\definecolor{mycolor}{rgb}{1.0,0,0.2}
\hypersetup{
    colorlinks=true,
    citecolor=mycolor,
    linkcolor=mycolor,
}

\usepackage{aliascnt} 

\newtheorem{theorem}{Theorem}[section]

\newaliascnt{proposition}{theorem}
\newtheorem{proposition}[proposition]{Proposition}
\aliascntresetthe{proposition}

\newaliascnt{lemma}{theorem}

\aliascntresetthe{lemma}

\newaliascnt{corollary}{theorem}

\aliascntresetthe{corollary}

\newaliascnt{definition}{theorem}
\newtheorem{definition}[definition]{Definition}
\aliascntresetthe{definition}

\newaliascnt{remark}{theorem}
\newtheorem{remark}[remark]{Remark}
\aliascntresetthe{remark}

\newaliascnt{example}{theorem}
\newtheorem{example}[example]{Example}
\aliascntresetthe{example}

\numberwithin{equation}{section}

\newcommand{\im}{\textrm{im}}

\newcommand{\End}{$\hfill{\blacksquare}$}

\newcommand{\gv}{{\rm GV}}

\newcommand{\prarrow}[2]{\ar@<0.5ex>[r]^-{#1} \ar@<-0.5ex>[r]_-{#2}}
\newcommand{\plarrow}[2]{\ar@<0.5ex>[l]^-{#1} \ar@<-0.5ex>[l]_-{#2}} 
\newcommand{\pdarrow}[2]{\ar@<0.5ex>[d]^-{#1} \ar@<-0.5ex>[d]_-{#2}} 
\newcommand{\puarrow}[2]{\ar@<0.5ex>[u]^-{#1} \ar@<-0.5ex>[u]_-{#2}}

\makeatletter
\newcommand{\subscripts}[3]{%
  \@mathmeasure\z@\displaystyle{#2}%
  \global\setbox\@ne\vbox to\ht\z@{}\dp\@ne\dp\z@
  \setbox\tw@\box\@ne
  \@mathmeasure4\displaystyle{\copy\tw@_{#1}}%
  \@mathmeasure6\displaystyle{{#2}_{#3}}%
  \dimen@-\wd6 \advance\dimen@\wd4 \advance\dimen@\wd\z@
  \hbox to\dimen@{}\mathop{\kern-\dimen@\box4\box6}%
}
\makeatother

\begin{document}

\title{Godbillon--Vey classes of Lie subalgebroids}
\author{Shuhei Yonehara}
\address{National Institute of Technology, Yonago College, Tottori, 683-8502, JAPAN}
\email{shuhei.yonehara.0201@gmail.com}

\begin{abstract}
  The Godbillon--Vey class is a secondary characteristic class which is defined for regular foliations and have been studied extensively. On the other hand, extending the Godbillon--Vey class to singular foliations is difficult, and a complete result has not yet been obtained. In this paper, we address this problem by focusing on a geometric object called a Lie algebroid on a manifold. More precisely, we fix a Lie algebroid and relatively define the Godbillon--Vey class for its Lie subalgebroids, and study their properties. We also present several examples.
\end{abstract}

\maketitle

\tableofcontents

\footnote[0]{2020 \textit{Mathematics Subject Classification}\ : 53C12, 58H05, 53D17, 57R20}

\section{Introduction}

The \textit{Godbillon--Vey class} was first introduced in \cite{godbillon1971invariant} and has been studied in various contexts in geometry and topology. On the other hand, extending the Godbillon--Vey class to \textit{singular foliations} is a difficult problem (for an extension under suitable assumptions, see for example \cite{macdonald2021chern}).

A singular foliation on a manifold $M$ is a $C^\infty(M)$-submodule $\mathcal{V}\subset \mathfrak{X}_c(M)$ ($\mathfrak{X}_c(M)$ denotes the set of all compactly supported vector fields on $M$) which is locally finitely generated and satisfies $[\mathcal{V},\mathcal{V}]\subset\mathcal{V}$. In this paper, we focus on the notion of a \textit{Lie algebroid}, which is closely related to singular foliations. A Lie algebroid is a generalization of the tangent bundle and is known to induce a singular foliation. 

The Godbillon--Vey class is a special case of secondary characteristic classes \cite{bott1972lectures}. Secondary characteristic classes for Lie algebroids have been studied by Crainic and Fernandes \cite{crainic2005secondary}, who constructed intrinsic characteristic classes associated with Lie algebroids themselves. In contrast, the purpose of the present paper is to study the Godbillon--Vey class classes associated with Lie subalgebroids. More precisely, We fix a Lie algebroid $\mathcal{A}$ and define the Godbillon--Vey class $\gv_{\mathcal{A}}(\mathcal{B})$ of a Lie subalgebroid $\mathcal{B}\subset\mathcal{A}$ relatively as an element of the Lie algebroid cohomology $H^\bullet(\mathcal{A})$ (the class is defined at the level of Lie algebroids, so it is not an invariant of singular foliations themselves). Moreover, we prove the naturality theorem:

\begin{theorem}(\autoref{thm:100}) 
  Let $\mathcal{A}\to M$ be a Lie algebroid and $f:N\to M$ a smooth map. Suppose that the pullback Lie algebroid $f^!\mathcal{A}\to N$ is defined. Let $\mathcal{B}\subset\mathcal{A}$ be a Lie subalgebroid, and suppose that $f^!\mathcal{B}$ is also defined. Then 
  \[\gv_{f^!\mathcal{A}}(f^!\mathcal{B})=f^!\gv_\mathcal{A}(\mathcal{B})\]
  holds.\hfill{$\Box$}
\end{theorem}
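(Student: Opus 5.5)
We first fix how $\gv_\mathcal{A}(\mathcal{B})$ is presumably built, so that the argument can follow the construction. Assume $\mathcal{B}\subset\mathcal{A}$ has corank $q$ and the quotient line bundle $\det(\mathcal{A}/\mathcal{B})^*$ is trivial (or oriented). Then there is a nowhere-vanishing $\omega\in\Gamma(\wedge^q\mathcal{A}^*)$ whose kernel defines $\mathcal{B}$, i.e. $\omega$ annihilates exactly the $q$-vectors with a $\mathcal{B}$-factor. The Frobenius-type integrability of $\mathcal{B}$ (closedness under the bracket) gives $d_\mathcal{A}\omega=\eta\wedge\omega$ for some $\eta\in\Gamma(\mathcal{A}^*)$. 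Then $\gv_\mathcal{A}(\mathcal{B})=[\eta\wedge(d_\mathcal{A}\eta)^q]\in H^{2q+1}(\mathcal{A})$, and it is independent of the choices of $\omega$ and $\eta$. The proof of the theorem will verify that pulling back this entire package along $f$ produces an admissible package for $f^!\mathcal{B}\subset f^!\mathcal{A}$.

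\textbf{Step 1: a pullback on forms.} Recall $f^!\mathcal{A}=TN\times_{TM}f^*\mathcal{A}=\{(v,a): df(v)=\rho(a)\}$. The projection $\mathrm{pr}:f^!\mathcal{A}\to \mathcal{A}$ covering $f$ is a Lie algebroid morphism. So $f^!:\Gamma(\wedge^\bullet\mathcal{A}^*)\to\Gamma(\wedge^\bullet(f^!\mathcal{A})^*)$, $\alpha\mapsto \mathrm{pr}^*\alpha$, is a cochain map and induces $f^!$ on $H^\bullet(\mathcal{A})$. This is presumably established earlier in the paper, and we take it as given. The ingredients are that $\mathrm{pr}^*$ commutes with $\wedge$ and with the differentials.

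\textbf{Step 2: $f^!\mathcal{B}$ sits compatibly.} Here $f^!\mathcal{B}=\{(v,b)\in f^!\mathcal{A}: b\in\mathcal{B}\}$, which is a Lie subalgebroid of $f^!\mathcal{A}$ with $f^!\mathcal{B}=\mathrm{pr}^{-1}(f^*\mathcal{B})$. The map $\mathrm{pr}$ induces a fibrewise map $f^!\mathcal{A}/f^!\mathcal{B}\to f^*(\mathcal{A}/\mathcal{B})$, which is injective by the description just given.

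\textbf{Step 3: it is an isomorphism.} We must show this induced map is surjective, so that the codimension is preserved. This is where the hypothesis that both pullbacks are defined (they are vector bundles) enters. Compare ranks: $\mathrm{rk} f^!\mathcal{A}=\dim N+\mathrm{rk}\mathcal{A}-\dim M$ under the transversality-type condition $df+\rho$ surjective, and similarly for $\mathcal{B}$. The difference of ranks is therefore $\mathrm{rk}\mathcal{A}-\mathrm{rk}\mathcal{B}=q$, and an injective map between bundles of equal rank is an isomorphism. If the paper's definedness condition is instead stated as constant rank, we expect the same rank count to follow from exactness of $0\to f^!\mathcal{A}\to TN\oplus f^*\mathcal{A}\to TM$ with cokernel independent of whether one uses $\mathcal{A}$ or $\mathcal{B}$. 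This is the step we expect to be the main obstacle: it needs a careful use of the definedness hypotheses to show that $\mathrm{pr}^*\omega$ is nowhere vanishing and that its kernel is exactly $f^!\mathcal{B}$.

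\textbf{Step 4: conclusion.} Given Step 3, $\omega':=\mathrm{pr}^*\omega$ is a nowhere-vanishing defining form for $f^!\mathcal{B}$. Since $\mathrm{pr}^*$ is a cochain map, $d_{f^!\mathcal{A}}\omega'=\mathrm{pr}^*(\eta\wedge\omega)=\eta'\wedge\omega'$ with $\eta'=\mathrm{pr}^*\eta$. By the well-definedness of the Godbillon--Vey class (independence of choices), $\gv_{f^!\mathcal{A}}(f^!\mathcal{B})=[\eta'\wedge(d\eta')^q]=[\mathrm{pr}^*(\eta\wedge(d\eta)^q)]=f^!\gv_\mathcal{A}(\mathcal{B})$.
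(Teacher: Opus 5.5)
Your proposal is correct and follows essentially the same route as the paper. Pull back the defining form and $\beta$ along the Lie algebroid morphism $f^!\mathcal{A}\to\mathcal{A}$, use that $f^!$ is a cochain map commuting with wedge products, and invoke independence of choices. Your Step 3 rank count, valid under the paper's transversality hypothesis $f_\ast(TN)+\rho(\mathcal{B})=TM$, justifies the paper's terse claim that $f^!\alpha_1,\dots,f^!\alpha_q$ are linearly independent. The constant-rank fallback you mention is not needed here and would not suffice in general: pulling back a foliation to one of its leaves gives $f^!\alpha=0$.
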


\vspace{0.1in}
\noindent{\bf Notation.} Throughout the paper, we denote by $M,N$ smooth manifolds. We denote by $\mathfrak{X}^k(M)$ and $\Omega^k(M)$ the set of all $k$-vector fields and $k$-forms on $M$, respectively.

\vspace{0.2in}
\noindent{\bf Acknowledgments.} The author is grateful to Ryushi Goto for providing valuable insights into this paper. He would also like to thank Noriaki Ikeda, Hsuan-Yi Liao, and Ping Xu for their valuable comments on this work.

\section{Lie algebroid and Lie algebroid cohomology}

A Lie algebroid is a generalization of the tangent bundle obtained by twisting it via a vector bundle morphism called the anchor map. In this section, we review the basic notions and examples used in this paper.

\begin{definition}
  A {\rm Lie algebroid} on $M$ is a triplet $(\mathcal{A},\rho,[\cdot,\cdot]_\mathcal{A})$ of a vector bundle $\mathcal{A}\to M$ and a bundle map $\rho:\mathcal{A}\to TM$, and a Lie bracket $[\cdot,\cdot]_\mathcal{A}$ on the space of sections $\Gamma(\mathcal{A})$, which satisfies the Leibniz identity
  \[[v,fw]_\mathcal{A}=f[v,w]_\mathcal{A}+(\mathcal{L}_{\rho(v)}f)w,\qquad v,w\in\Gamma(\mathcal{A}),\ f\in C^\infty(M).\]The map $\rho$ is called the {\rm anchor map}.\End
\end{definition}

For a Lie algebroid $(\mathcal{A},\rho,[\cdot,\cdot]_\mathcal{A})$,
\[\rho([v,w]_\mathcal{A})=[\rho(v),\rho(w)]\]holds for any $v,w\in\Gamma(\mathcal{A})$. Consequently, a Lie algebroid $(\mathcal{A},\rho,[\cdot,\cdot]_\mathcal{A})$ induces a singular foliation $\rho(\Gamma_c(\mathcal{A}))\subset \mathfrak{X}_c(M)$, where $\Gamma_c(\mathcal{A})$ denotes the set of compactly supported sections of $\mathcal{A}$.

\vspace{0.1in}
We present several examples of Lie algebroids below.

\begin{example}
  $(TM,\rm{id}_{TM},[\cdot,\cdot])$ is a Lie algebroid on $M$, and all Lie algebroids can be seen as generalizations of this example.\End
\end{example}

\begin{example}
  A finite dimensional Lie algebra defines a Lie algebroid over a point, whose anchor map is trivial.\End
\end{example}

\begin{example}[Action algebroids]
  Let $\varphi:\mathfrak{g}\to\mathfrak{X}(M)$ be a Lie algebra action of a Lie algebra $\mathfrak{g}$ on $M$. Then one can define a Lie algebroid structure on the trivial vector bundle $M\times\mathfrak{g}\to M$ as follows. The anchor map $\rho:M\times\mathfrak{g}\to TM$ is defined by
\[
\rho(x,v)=\varphi(v)_x.
\]
Identifying $\Gamma(\mathcal{A})$ with $C^\infty(M;\mathfrak{g})$, the Lie bracket on $\Gamma(\mathcal{A})$ is defined by
\[
[f,g](x)
=[f(x),g(x)]_{\mathfrak{g}}
+(\mathcal{L}_{\varphi(f(x))}g)(x)
-(\mathcal{L}_{\varphi(g(x))}f)(x).
\]
We denote this Lie algebroid by $\mathfrak{g}\ltimes M$.\End
\end{example}

The next example is Lie algebroids induced by Poisson manifolds. A \textit{Poisson structure} on a manifold $M$ is a bivector $\pi\in\mathfrak{X}^2(M)$ which satisfies $[\pi,\pi]_S=0$, where $[\cdot,\cdot]_S$ denotes the \textit{Schouten bracket}, which is defined by
\[[X,Y]_S=\displaystyle\sum_{i,j}(-1)^{i+j}[X_i,Y_j]\wedge X_1\wedge\cdots\wedge \hat{X_{i}}\wedge\cdots\wedge X_k\wedge Y_1\wedge\cdots\wedge \hat{Y_{j}}\wedge\cdots\wedge Y_l\]
for $X=X_1\wedge\cdots\wedge X_k$ and $Y=Y_1\wedge\cdots\wedge Y_l$. For a bivector $\pi\in\mathfrak{X}^2(M)$, $[\pi,\pi]_S\in\mathfrak{X}^3(M)$ is a 3-vector whose coefficients of local representation are given by
    \[([\pi,\pi]_S)_{ijk}=\displaystyle\sum_{l}\left(\displaystyle\frac{\partial\pi_{ij}}{\partial x_l}\pi_{lk}+\displaystyle\frac{\partial\pi_{ki}}{\partial x_l}\pi_{lj}+\displaystyle\frac{\partial\pi_{jk}}{\partial x_l}\pi_{li}\right).\]A pair $(M,\pi)$ of a manifold $M$ and a Poisson structure $\pi$ on $M$ is called a \textit{Poisson manifold}.

\vspace{0.1in}
Let $(M,\pi)$ be a Poisson manifold. Then we obtain a Lie algebroid $(T^\ast M,\pi^\sharp,[\cdot,\cdot]_\pi)$, where the anchor map $\pi^\sharp:T^\ast M\to TM$ is defined by $\pi^\sharp(\alpha)=\iota_\alpha\pi$ and the Lie bracket $[\cdot,\cdot]_\pi$ on $\Omega^1(M)$ is defined by
\[[\alpha,\beta]_\pi=\mathcal{L}_{\pi^\sharp(\alpha)}\beta-\mathcal{L}_{\pi^\sharp(\beta)}\alpha-d(\pi(\alpha,\beta))\](for more details, see \cite{crainic2021lectures}).

\vspace{0.2in}
For a Lie algebroid $(\mathcal{A},\rho,[\cdot,\cdot]_\mathcal{A})$, we can asociate a cochain complex $(\Omega^k(\mathcal{A}),d_\mathcal{A})$ in the following way. We set $\Omega^k(\mathcal{A})=\Gamma(\wedge^k\mathcal{A}^\ast)$ and define the differential map $d_\mathcal{A}:\Omega^k(\mathcal{A})\to\Omega^{k+1}(\mathcal{A})$ as
\[\begin{split}
    d_\mathcal{A}\eta(v_0,\cdots,v_k)&=\displaystyle\sum_{i=0}^{k}(-1)^i\mathcal{L}_{\rho(v_i)}(\eta(v_0,\cdots,\hat{v_i},\cdots,v_k))\\
    &+\displaystyle\sum_{0\leq i<j\leq k}(-1)^{i+j}\eta([v_i,v_j]_\mathcal{A},v_0,\cdots,\hat{v_i},\cdots,\hat{v_j},\cdots,v_k)
\end{split}\]for $\eta\in\Omega^k(\mathcal{A})$ and $v_0,\cdots,v_k\in\Gamma(\mathcal{A})$. Then we can prove that $d_\mathcal{A}\circ d_\mathcal{A}=0$ holds, thus $(\Omega^k(\mathcal{A}),d_\mathcal{A})$ is a cochain complex. We refer to it as the \textit{Lie algebroid complex}, and we refer to its cohomology $H^k(\mathcal{A}):=\ker d_\mathcal{A}/\im d_\mathcal{A}$ as the \textit{Lie algebroid cohomology} of $(\mathcal{A},\rho,[\cdot,\cdot]_\mathcal{A})$.

\begin{example}
  For the Lie algebroid $(TM,\operatorname{id}_{TM},[\cdot,\cdot])$, the Lie algebroid differential $d_{TM}$ is the exterior derivative, and its Lie algebroid cohomology $H^k(TM)$ coincides with the de~Rham cohomology $H^k_{\mathrm{dR}}(M)$.\End
\end{example}

\begin{example}\label{ex:1}
  In the case of $\mathfrak{g}\to\{{\rm pt}\}$, the Lie algebroid differential $d_\mathfrak{g}:\wedge^k\mathfrak{g}^\ast\to\wedge^{k+1}\mathfrak{g}^\ast$ is given by
  \[d_\mathcal{\mathfrak{g}}\eta(v_0,\cdots,v_k)=\displaystyle\sum_{0\leq i<j\leq k}(-1)^{i+j}\eta([v_i,v_j]_\mathfrak{g},v_0,\cdots,\hat{v_i},\cdots,\hat{v_j},\cdots,v_k),\]
  and its cohomology is the {\rm Chevalley--Eilenberg cohomology} of the Lie algebra $\mathfrak{g}$.\End
\end{example}

\begin{example}
  Let $\varphi:\mathfrak{g}\to\mathfrak{X}(M)$ be a Lie algebra action of a Lie algebra $\mathfrak{g}$ on a manifold $M$, and let $\mathcal{A}=M\ltimes\mathfrak{g}\to M$ be the associated action Lie algebroid. Then
\[
\Omega^k(\mathcal{A})
=
\Gamma(\Lambda^k\mathcal{A}^*)
\cong
C^\infty(M)\otimes\Lambda^k\mathfrak{g}^*
\]
holds. Moreover, the Lie algebroid differential $d_\mathcal{A}$ coincides with the Chevalley--Eilenberg differential associated with the $\mathfrak g$-module $C^\infty(M)$;
\begin{align*}
(d_\mathcal{A}\eta)(v_0,\ldots,v_k)
&=
\sum_i(-1)^i
\varphi(v_i)
\bigl(
\eta(v_0,\ldots,\hat v_i,\ldots,v_k)
\bigr)
\\
&\quad+
\sum_{i<j}
(-1)^{i+j}
\eta([v_i,v_j]_{\mathfrak g},
v_0,\ldots,\hat v_i,\ldots,\hat v_j,\ldots,v_k)
\end{align*}
for $\eta\in\Omega^k(\mathcal{A})$ and $v_0,\ldots,v_k\in\mathfrak g$. Therefore,
\[
H^\bullet(A)
\cong
H^\bullet(\mathfrak g;C^\infty(M)),
\]
that is, the Chevalley--Eilenberg cohomology of $\mathfrak g$ with coefficients in the $\mathfrak g$-module $C^\infty(M)$.

In particular, when $M=\{\ast\}$, the action Lie algebroid $M\ltimes\mathfrak g\to M$ reduces to the Lie algebroid $\mathfrak g\to\{\ast\}$, and its cohomology is the Chevalley--Eilenberg cohomology with respect to the trivial representation (\autoref{ex:1}).\End
\end{example}

\begin{example}
  For a Lie algebroid $(T^\ast M,\pi^\sharp,[\cdot,\cdot]_\pi)$ asociated to a Poisson manifold $(M,\pi)$, the Lie algebroid complex $(\Omega^k(T^\ast M),d_{T^\ast M})$ turns out to be a cochain complex $(\mathfrak{X}^k(M),d_\pi)$, where the differential $d_\pi$ is defined by $d_\pi=[\pi,\cdot]_S$. The cohomology of the complex $(\mathfrak{X}^k(M),d_\pi)$ is called the {\rm Poisson cohomology} of $(M,\pi)$, which is precisely the Lie algebroid cohomology of $(T^\ast M,[\cdot,\cdot]_\pi,\pi^\sharp)$. \End
\end{example}

Let $(\mathcal{A}\to M,\rho_\mathcal{A},[\cdot,\cdot]_\mathcal{A})$ and $(\mathcal{B}\to N,\rho_\mathcal{B},[\cdot,\cdot]_\mathcal{B})$ be Lie algebroids. A bundle map $\Phi:\mathcal{B}\to\mathcal{A}$ is said to be a \textit{Lie algebroid morphism} when the pullback map $\Phi^!:\Omega^\bullet(\mathcal{A})\to\Omega^\bullet(\mathcal{B})$ is a cochain map, namely, $\Phi^! d_\mathcal{A}=d_\mathcal{B}\Phi^!$ holds. If $\Phi:\mathcal{B}\to\mathcal{A}$ is a Lie algebroid morphism, it gives rise to a map $\Phi^!:H^\bullet(\mathcal{A})\to H^\bullet(\mathcal{B})$.

\section{Godbillon--Vey classes for Lie subalgebroids}

In this section, we give the definition of the Godbillon--Vey classes for Lie subalgebroids. A Lie algebroid $(\mathcal{B}\to N,\rho_\mathcal{B},[\cdot,\cdot]_\mathcal{B})$ is said to be a \textit{Lie subalgebroid} of another Lie algebroid $(\mathcal{A}\to M,\rho_\mathcal{A},[\cdot,\cdot]_\mathcal{A})$ when $\mathcal{B}\to N$ is a subbundle of $\mathcal{A}\to M$ and the inclusion $\mathcal{B}\hookrightarrow\mathcal{A}$ is a Lie algebroid morphism. In the case that the base space of $\mathcal{B}$ is the same as that of $
\mathcal{A}$, a subbundle $\mathcal{B}\subset\mathcal{A}$ is a Lie subalgebroid of $(\mathcal{A},\rho,[\cdot,\cdot]_\mathcal{A})$ if and only if $[\Gamma(\mathcal{B}),\Gamma(\mathcal{B})]_{\mathcal{A}}\subset\Gamma(\mathcal{B})$ holds. Such a pair $(\mathcal{A},\mathcal{B})$ is called a \emph{Lie pair} \cite{liao2023atiyah}.

Suppose that a corank $q$ Lie subalgebroid $\mathcal{B}\subset\mathcal{A}$ is \textit{coorientable}, i.e., the quotient bundle $\mathcal{A}/\mathcal{B}$ is trivial. Then there is a nowhere vanishing form $\alpha\in\Omega^q(\mathcal{A})$ which is locally decomposable and satisfies
\[\mathcal{B}=\{v\in\Gamma(\mathcal{A}) \mid \eta(v)=0,\ \eta\in\mathcal{I}\},\]where $\mathcal{I}$ is defined by
        \[\mathcal{I}=\{\eta\in\Omega^1(\mathcal{A})\mid \eta\wedge\alpha=0\}\]
(since $\mathcal{B}$ is coorientable, we can take $\alpha$ globally). In other words, locally we have
\[
\mathcal{B}|_U=\bigcap_{i=1}^q \ker \alpha_i,
\]
where $\alpha=\alpha_1\wedge\cdots\wedge\alpha_q$ holds in an open subset $U\subset M$. Then the condition $[\Gamma(\mathcal{B}),\Gamma(\mathcal{B})]_\mathcal{A} \subset \Gamma(\mathcal{B})$ can be expressed in the form $d_\mathcal{A}\alpha = \beta \wedge \alpha$ for some $\beta \in \Omega^1(\mathcal{A})$. 

\begin{theorem}\label{thm:0}
  The cohomology class $[\beta\wedge (d_\mathcal{A}\beta)^q]_\mathcal{A}\in H^{2q+1}(\mathcal{A})$ is independent on the choice of the forms $\alpha$ and $\beta$, and thus uniquely determined by the Lie subalgebroid $\mathcal{B}$.
\end{theorem}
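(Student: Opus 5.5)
The plan is to mimic the classical argument for foliations, since the Lie algebroid complex $(\Omega^\bullet(\mathcal{A}),d_\mathcal{A})$ is a graded-commutative differential graded algebra. All manipulations take place in this algebra: wedge products, the Leibniz rule for $d_\mathcal{A}$, and $d_\mathcal{A}^2=0$. First I check that $\beta\wedge(d_\mathcal{A}\beta)^q$ is closed. Since $d_\mathcal{A}(\beta\wedge(d_\mathcal{A}\beta)^q)=(d_\mathcal{A}\beta)^{q+1}$, it suffices to show $(d_\mathcal{A}\beta)^{q+1}=0$.

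Apply $d_\mathcal{A}$ to $d_\mathcal{A}\alpha=\beta\wedge\alpha$. This gives $0=d_\mathcal{A}\beta\wedge\alpha-\beta\wedge\beta\wedge\alpha=d_\mathcal{A}\beta\wedge\alpha$. Locally write $\alpha=\alpha_1\wedge\cdots\wedge\alpha_q$ and complete $\alpha_1,\dots,\alpha_q$ to a local frame of $\mathcal{A}^\ast$. Then $d_\mathcal{A}\beta\wedge\alpha=0$ forces $d_\mathcal{A}\beta=\sum_i\alpha_i\wedge\gamma_i$ lying in the ideal generated by $\mathcal{I}$. Hence $(d_\mathcal{A}\beta)^{q+1}$ is a sum of terms each containing $q+1$ factors from $\{\alpha_1,\dots,\alpha_q\}$, so it vanishes. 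This is pointwise linear algebra on $\wedge^\bullet\mathcal{A}^\ast_x$, so it is unaffected by the singular anchor.

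Next comes independence of $\beta$ with $\alpha$ fixed. If $\beta'$ also satisfies $d_\mathcal{A}\alpha=\beta'\wedge\alpha$, then $(\beta'-\beta)\wedge\alpha=0$, so $\beta'=\beta+\eta$ with $\eta\in\mathcal{I}$; locally $\eta=\sum f_i\alpha_i$. Then independence of $\alpha$: any other choice has the form $\alpha'=f\alpha$ with $f$ nowhere vanishing, since $\wedge^q(\mathcal{A}/\mathcal{B})^\ast$ is a line bundle and both forms are nonvanishing sections of it. We get $d_\mathcal{A}\alpha'=d_\mathcal{A}f\wedge\alpha+f\beta\wedge\alpha=(\beta+d_\mathcal{A}\log|f|)\wedge\alpha'$, where $d_\mathcal{A}\log|f|$ means the pullback of $d\log|f|$ by the anchor. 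So it suffices to handle $\beta'=\beta+\eta$ with $\eta\in\mathcal{I}$, and $\beta'=\beta+d_\mathcal{A}g$ for a function $g$.

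In the exact case, $d_\mathcal{A}\beta'=d_\mathcal{A}\beta$, and
\[\beta'\wedge(d_\mathcal{A}\beta')^q-\beta\wedge(d_\mathcal{A}\beta)^q=d_\mathcal{A}g\wedge(d_\mathcal{A}\beta)^q=d_\mathcal{A}\bigl(g(d_\mathcal{A}\beta)^q\bigr).\]

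The case $\eta\in\mathcal{I}$ is the main obstacle. Expand
\[\beta'\wedge(d_\mathcal{A}\beta')^q=(\beta+\eta)\wedge(d_\mathcal{A}\beta+d_\mathcal{A}\eta)^q\]
and aim to show that the difference equals $d_\mathcal{A}$ of
\[\sum_{k}c_k\,\beta\wedge\eta\wedge(d_\mathcal{A}\beta)^{k}\wedge(d_\mathcal{A}\eta)^{q-1-k}\]
for suitable constants $c_k$, up to terms that vanish for degree reasons. The key counting facts are these. First, $d_\mathcal{A}\eta$ lies in the ideal generated by $\mathcal{I}$, because $d_\mathcal{A}\alpha_i\in\mathcal{I}$-ideal follows from integrability as before. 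Second, $d_\mathcal{A}\beta$ lies in that ideal as shown above, and so does $\eta$ itself. Any product containing more than $q$ factors from the ideal vanishes. Tracking how many $\mathcal{I}$-factors each term carries shows that only the terms linear in $\eta$ survive. This yields the difference $\eta\wedge(d_\mathcal{A}\beta)^q+q\,\beta\wedge d_\mathcal{A}\eta\wedge(d_\mathcal{A}\beta)^{q-1}$, where the first term vanishes since it has $q+1$ ideal factors. The second term equals $-q\,d_\mathcal{A}(\beta\wedge\eta\wedge(d_\mathcal{A}\beta)^{q-1})$ modulo terms with $q+1$ ideal factors, which again vanish.

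I expect this bookkeeping to be the delicate step, together with justifying the local arguments globally. The forms involved are global, so the vanishing statements, being pointwise, globalize automatically.
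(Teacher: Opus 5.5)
Your route is the paper's: closedness via Cartan's lemma, then $\beta'=\beta+\eta$ with $\eta\in\mathcal{I}$, and $\alpha'=f\alpha$ reduced to $\beta+d_\mathcal{A}\log|f|$. The paper only writes out $q=1$, and there your argument is complete and agrees with it: with $\eta=f\alpha$, your primitive $-d_\mathcal{A}(\beta\wedge\eta)$ is the paper's $-d_\mathcal{A}(f\,d_\mathcal{A}\alpha)$.

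For $q\ge 2$, however, the claim that ``only the terms linear in $\eta$ survive'' is false. Write $B=d_\mathcal{A}\beta$ and $E=d_\mathcal{A}\eta$. The terms $\eta\wedge(B+E)^q$ do vanish, but what remains is
\[
\beta'\wedge(d_\mathcal{A}\beta')^q-\beta\wedge(d_\mathcal{A}\beta)^q=\sum_{k=1}^{q}\binom{q}{k}\,\beta\wedge B^{q-k}\wedge E^{k}.
\]
For $k\ge 2$ these terms carry exactly $q$ ideal factors, since $\beta$ contributes none, so counting does not kill them. Take $\mathcal{A}=T\mathbb{R}^5$, $\alpha=e^{x_3}dx_1\wedge dx_2$, $\beta=dx_3$ and $\eta=x_4\,dx_1+x_5\,dx_2$. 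Your formula predicts a zero difference, because $B=0$. The actual difference is $\beta\wedge E^2=2\,dx_3\wedge dx_4\wedge dx_1\wedge dx_5\wedge dx_2\neq 0$.

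The repair stays inside your own ansatz. For $a+b=q-1$,
\[
d_\mathcal{A}\bigl(\beta\wedge\eta\wedge B^{a}\wedge E^{b}\bigr)=\eta\wedge B^{a+1}\wedge E^{b}-\beta\wedge B^{a}\wedge E^{b+1},
\]
and the first term has $q+1$ ideal factors, so it vanishes. Hence every term of the sum is exact, not just the linear one:
\[
\beta'\wedge(d_\mathcal{A}\beta')^q-\beta\wedge(d_\mathcal{A}\beta)^q=-d_\mathcal{A}\Bigl(\sum_{k=1}^{q}\binom{q}{k}\,\beta\wedge\eta\wedge B^{q-k}\wedge E^{k-1}\Bigr).
\]
In your indexing this means $c_k=-\binom{q}{k}$.

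With this correction the rest of your proof works for all $q$: closedness from $B^{q+1}=0$, the exact case $d_\mathcal{A}g\wedge B^q=d_\mathcal{A}(g\,B^q)$, and the reduction for $\alpha'=f\alpha$. That covers more than the paper actually writes down.
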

\begin{proof}
  For simplicity, we prove the statement only in the case $q=1$, since the general case can be proved in the same way.

  Applying $d_{\mathcal A}$ to both sides of $d_{\mathcal A}\alpha=\beta\wedge\alpha,
$ we obtain
$$
0
=d_{\mathcal A}\beta\wedge\alpha
-\beta\wedge d_{\mathcal A}\alpha
=d_{\mathcal A}\beta\wedge\alpha.
$$
Hence, by Cartan's lemma, there exists $\eta\in\Omega^1(\mathcal A)$ such that $
d_{\mathcal A}\beta=\eta\wedge\alpha$. Therefore, $\beta\wedge d_{\mathcal A}\beta$ is $d_{\mathcal A}$-closed.

Suppose that $\beta'\in\Omega^1(\mathcal A)$ is another form satisfying $d_{\mathcal A}\alpha=\beta'\wedge\alpha.$ Then
$$
(\beta'-\beta)\wedge\alpha
=
d_{\mathcal A}\alpha-d_{\mathcal A}\alpha
=
0
$$
holds. Hence, by Cartan's lemma again, there exists a function
$f\in C^\infty(M)$ such that $
\beta'-\beta=f\alpha.$ Therefore, we obtain
\[\begin{split}
    \beta'\wedge d_{\mathcal{A}}\beta'
    &=(\beta+f\alpha)\wedge(d_{\mathcal{A}}\beta+d_{\mathcal{A}}f\wedge\alpha+fd_{\mathcal{A}}\alpha)\\
    &=\beta\wedge d_{\mathcal{A}}\beta+\beta\wedge d_{\mathcal{A}}f\wedge\alpha+f\beta\wedge d_{\mathcal{A}}\alpha\\
    &\qquad
    +f\alpha\wedge d_{\mathcal{A}}\beta
    +f\alpha\wedge d_{\mathcal{A}}f\wedge\alpha
    +f^2\alpha\wedge d_{\mathcal{A}}\alpha\\
    &=\beta\wedge d_{\mathcal{A}}\beta-d_{\mathcal{A}}(f\,d_{\mathcal{A}}\alpha).
\end{split}\]

Finally, let $\alpha'\in\Omega^1(\mathcal{A})$ be another form defining $\mathcal B$. Then $\alpha'=g\alpha$ for some nowhere-vanishing function $g\in C^\infty(M)$. Hence,
$$
d_{\mathcal A}\alpha'
=d_{\mathcal A}g\wedge\alpha
+gd_{\mathcal A}\alpha
=\left(\frac{d_{\mathcal A}g}{g}+\beta\right)\wedge\alpha'
$$
holds. Thus, setting
$$
\beta'=\frac{d_{\mathcal A}g}{g}+\beta,
$$
we have $
d_{\mathcal A}\alpha'=\beta'\wedge\alpha'.
$ Moreover, for this choice of $\beta'$,
$$
\begin{aligned}
\beta'\wedge d_{\mathcal A}\beta'
&=
\left(\frac{d_{\mathcal A}g}{g}+\beta\right)\wedge d_{\mathcal A}\beta\\
&=
\beta\wedge d_{\mathcal A}\beta
-d_{\mathcal A}\left(\frac{d_{\mathcal A}g}{g}\wedge\beta\right).
\end{aligned}
$$
Therefore, the cohomology class $[\beta\wedge d_{\mathcal A}\beta]_{\mathcal A}$ is independent of the choices of $\alpha$ and $\beta$.
\end{proof}

\begin{definition}
        We call the cohomology class $\gv_{\mathcal{A}}\mathcal{B}:=[\beta\wedge (d_\mathcal{A}\beta)^q]_\mathcal{A}\in H^{2q+1}(\mathcal{A})$ the \textit{Godbillon--Vey class} of $\mathcal{B}$ with respect to $\mathcal{A}$.
\end{definition}

Let $(\mathcal{A},\rho,[\cdot,\cdot]_\mathcal{A})$ be a Lie algebroid over $M$ and $f:N\to M$ a smooth map. Assume that $f$ is transverse to $\mathcal{A}$, i.e., $f_\ast(TN)+\rho(\mathcal{A})=TM$ holds. Then
\[f^!\mathcal{A}:=\mathcal{A}\times_{TM}TN=\{(v,X)\in\mathcal{A}\times TN\mid\rho_\mathcal{A}(v)=f_\ast X\}\]
is a vector bundle over $N$ (subbundle of $\mathcal{A}\times_{M}TN\to N$). Moreover, we obtain a Lie algebroid $(f^!\mathcal{A},\rho_{f^!\mathcal{A}},[\cdot,\cdot]_{f^!\mathcal{A}})$ where the anchor map $\rho_{f^!\mathcal{A}}$ is the projection on $TN$ and the Lie bracket $[\cdot,\cdot]_{f^!\mathcal{A}}$ is defined by 
\[[(v_1,X_1),(v_2,X_2)]_{f^!\mathcal{A}}:=([v_1,v_2]_{\mathcal{A}},[X_1,X_2])\]
for sections of the form $(v,X)$ ($v\in\Gamma(\mathcal{A})$ and $X\in\mathfrak{X}(N)$), and then is extended to $\Gamma(f^!\mathcal{A})\times\Gamma(f^!\mathcal{A})$ uniquely by requiring the Leibniz rule (note that the projection of a section of $f^!\mathcal{A}$ onto $\mathcal{A}$ is not necessarily a global section of $\mathcal{A}$ over $M$. In fact, every section of $f^!\mathcal{A}$ can be written as a finite sum of sections of the form $(v,X)$ multiplied by smooth functions). The Lie algebroid is called the \textit{pullback Lie algebroid} of $(\mathcal{A},\rho,[\cdot,\cdot]_\mathcal{A})$ by $f$.

\begin{remark}
  In this paper, we consider only the Godbillon--Vey classes of Lie subalgebroids over the same base manifold. For a Lie subalgebroid $\mathcal{B}\to N$ whose base manifold differs from $M$, one can define its Godbillon--Vey class in the same manner by regarding it as a subalgebroid of the pullback Lie algebroid $i^!\mathcal{A}\to N$, where $i:N\hookrightarrow M$ is the inclusion map.\End
\end{remark}

By using the projection $f^!\mathcal{A}\to\mathcal{A}$, one can pullback a section of $\Omega^k(\mathcal{A})$ to a section of $\Omega^k(f^!\mathcal{A})$. We denote the map as $f^!:\Omega^\bullet(\mathcal{A})\to\Omega^\bullet(f^!\mathcal{A})$.

\begin{proposition}\label{prop:1}
  The projection $f^!\mathcal{A}\to\mathcal{A}$ is a Lie algebroid morphism, namely, the map $f^!:\Omega^\bullet(\mathcal{A})\to\Omega^\bullet(f^!\mathcal{A})$ commutes with differential maps $d_\mathcal{A},d_{f^!\mathcal{A}}$. Consequently, it gives rise to a map $f^!:H^\bullet(\mathcal{A})\to H^\bullet(f^!\mathcal{A})$.
\end{proposition}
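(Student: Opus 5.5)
We verify the cochain identity $f^!\circ d_{\mathcal A}=d_{f^!\mathcal A}\circ f^!$ directly from the formula for the Lie algebroid differential. Write $p:f^!\mathcal{A}\to\mathcal{A}$ for the projection $(v,X)\mapsto v$, covering $f:N\to M$. For $\eta\in\Omega^k(\mathcal A)$ the pullback is $(f^!\eta)_y(\xi_1,\dots,\xi_k)=\eta_{f(y)}(p\xi_1,\dots,p\xi_k)$. Both sides of the identity are tensorial, $C^\infty(N)$-multilinear in the arguments, since $d_{f^!\mathcal A}$ of a form is again a form. It therefore suffices to check equality on a family of sections spanning $f^!\mathcal A$ pointwise, or at least locally over $C^\infty(N)$.

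By the remark preceding the proposition, every section of $f^!\mathcal A$ is locally a $C^\infty(N)$-combination of sections of the form $(v,X)$ with $v\in\Gamma(\mathcal A)$, $X\in\mathfrak X(N)$ and $\rho_{\mathcal A}(v)\circ f=f_\ast X$. So I will evaluate both sides on $\xi_i=(v_i,X_i)$, $i=0,\dots,k$.

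Two facts are needed. First, for such $\xi_i$ we have $f^!\eta(\xi_0,\dots,\hat\xi_i,\dots,\xi_k)=\eta(v_0,\dots,\hat v_i,\dots,v_k)\circ f$. Since $X_i$ is $f$-related to $\rho_{\mathcal A}(v_i)$ along $f$, we get
\[
\mathcal L_{X_i}(g\circ f)=(\mathcal L_{\rho_{\mathcal A}(v_i)}g)\circ f,\qquad g\in C^\infty(M).
\]
This identifies the anchor terms of $d_{f^!\mathcal A}f^!\eta$ with the pullbacks of the anchor terms of $d_{\mathcal A}\eta$. Second, by the definition of the bracket, $[\xi_i,\xi_j]_{f^!\mathcal A}=([v_i,v_j]_{\mathcal A},[X_i,X_j])$, whose $\mathcal A$-component is $[v_i,v_j]_{\mathcal A}$ pulled back along $f$. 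Hence the bracket terms match as well. Summing the two kinds of terms gives $(d_{f^!\mathcal A}f^!\eta)(\xi_0,\dots,\xi_k)=(f^!d_{\mathcal A}\eta)(\xi_0,\dots,\xi_k)$. By $C^\infty(N)$-multilinearity of both sides and the local spanning property, the identity holds for all sections.

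The main obstacle is justifying that the bracket on $f^!\mathcal A$ is well defined and really has the stated form on the spanning sections, and that these sections span locally. Transversality gives this: $f^!\mathcal A$ is the kernel of the surjective bundle map $f^\ast\mathcal A\oplus TN\to f^\ast TM$, $(v,X)\mapsto\rho(v)-f_\ast X$. Any point of $N$ therefore has local frames of $f^!\mathcal A$ that are $C^\infty(N)$-combinations of pairs $(v,X)$ with $v$ global. Well-definedness of the bracket is part of the paper's (standard) construction of the pullback algebroid, which I take as given.

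The claimed map on cohomology is then immediate: a cochain map sends closed forms to closed forms and exact forms to exact forms.
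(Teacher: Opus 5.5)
Your computation on pairs $\xi_i=(v_i,X_i)$ is exactly the paper's proof, and the anchor and bracket identities you use are correct for such pairs. The gap is the reduction to these pairs, which you rightly call the main obstacle, and your justification of it does not work.

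The kernel description of $f^!\mathcal A$ only shows that $f^!\mathcal A$ is a vector bundle. A local frame element $(\sum_i\phi_i f^\ast v_i,X)$ splits only as $\sum_i\phi_i(f^\ast v_i,0)+(0,X)$. These summands are not sections of $f^!\mathcal A$, so neither $[\xi_i,\xi_j]=([v_i,v_j]_{\mathcal A},[X_i,X_j])$ nor $\mathcal L_{X_i}(g\circ f)=(\mathcal L_{\rho(v_i)}g)\circ f$ applies to them.

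In fact the spanning claim is false for non-immersive $f$. The paper asserts it in the remark before the proposition, and its proof has the same hole. Take $M=N=\mathbb R$, $\mathcal A=TM$, $f(y)=y^2$. A pair $(g\,\partial_x,h\,\partial_y)$ satisfies $\rho(v)\circ f=f_\ast X$ exactly when $g(y^2)=2y\,h(y)$. This forces $g(0)=0$, hence $g(x)=x\,k(x)$ and $h(y)=\tfrac12 y\,k(y^2)$, so $h(0)=0$. Thus every $C^\infty(N)$-combination of such pairs vanishes at $y=0$. Yet $(2y\,\partial_x,\partial_y)$ is a nowhere-vanishing section of $f^!\mathcal A$.

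The proposition is still true, and either of two additions closes the gap.

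\emph{First option.} Work with arbitrary sections $\xi_a=(\sum_i\phi^a_i f^\ast v^a_i,X_a)$, where $\sum_i\phi^a_i\,\rho(v^a_i)\circ f=f_\ast X_a$. The Leibniz extension of the bracket is then
\[
[\xi_a,\xi_b]_{f^!\mathcal A}=\Bigl(\sum_{i,j}\phi^a_i\phi^b_j\,f^\ast[v^a_i,v^b_j]_{\mathcal A}+\sum_j X_a(\phi^b_j)\,f^\ast v^b_j-\sum_i X_b(\phi^a_i)\,f^\ast v^a_i,\ [X_a,X_b]\Bigr).
\]
Use $X_a(g\circ f)=\sum_i\phi^a_i\,(\rho(v^a_i)g)\circ f$, which follows directly from the constraint. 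The terms containing derivatives of the $\phi$'s cancel between the anchor part and the bracket part, exactly as in the proof that $d_{\mathcal A}$ is tensorial. What remains is $f^!(d_{\mathcal A}\eta)(\xi_0,\dots,\xi_k)$.

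\emph{Second option.} Keep your computation and add a density step. Near a point where $f$ has locally constant rank, $f$ is locally a submersion followed by an embedding. For each of these the spanning is elementary: lift $\rho(v)$ along the submersion, or extend a frame of $\rho^{-1}(TS)|_S$ off the image $S$. So there your pairs (with $v$ a local section of $\mathcal A$) do span $f^!\mathcal A$. Such points form an open dense subset of $N$. Two smooth sections of $\wedge^{k+1}(f^!\mathcal A)^\ast$ that agree on a dense set agree everywhere.
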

\begin{proof}
  It is sufficient to prove for sections of the form $u=(v, X)$, where $v\in\Gamma(\mathcal{A})$, $X\in\mathfrak{X}(N)$. Take $\eta\in\Omega^k(\mathcal{A})$ and $u_i=(v_i,X_i)$ ($i=0,\cdots,k$). We have $\rho_{f^!\mathcal{A}}(u_i)=X_i$ and $\rho(v_i)=f_\ast X_i$, and thus
  \[\begin{split}
    (d_{f^!\mathcal{A}}f^!\eta)(u_0,\cdots,u_k)&=\displaystyle\sum_{i=0}^{k}(-1)^i\mathcal{L}_{\rho_{f^!\mathcal{A}}(u_i)}((f^!\eta)(u_0,\cdots,\hat{u_i},\cdots,u_k))\\
    &+\displaystyle\sum_{0\leq i<j\leq k}(-1)^{i+j}(f^!\eta)([v_i,v_j]_{f^!\mathcal{A}},u_0,\cdots,\hat{u_i},\cdots,\hat{u_j},\cdots,u_k)\\
    &=\displaystyle\sum_{i=0}^{k}(-1)^i\mathcal{L}_{\rho(v_i)}(\eta(v_0,\cdots,\hat{v_i},\cdots,v_k))\\
    &+\displaystyle\sum_{0\leq i<j\leq k}(-1)^{i+j}\eta([v_i,v_j]_\mathcal{A},v_0,\cdots,\hat{v_i},\cdots,\hat{v_j},\cdots,v_k)\\
    &=(f^!d_\mathcal{A}\eta)(u_0,\cdots,u_k)
\end{split}\]holds.
\end{proof}

We now show the naturality of the Godbillon--Vey class defined above. Let $\mathcal{B}\subset\mathcal{A}$ be a Lie subalgebroid which is transverse to $f$, i.e., $f_\ast(TN)+\rho(\mathcal{B})=TM$ holds. Then the pullback algebroid $f^!\mathcal{B}$ is a Lie subalgebroid of $f^!\mathcal{A}$.
\begin{theorem}\label{thm:100}
  Under the above settings,
  \[\gv_{f^!\mathcal{A}}(f^!\mathcal{B})=f^!\gv_\mathcal{A}(\mathcal{B})\]
  holds.
\end{theorem}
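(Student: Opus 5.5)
The plan is to pull back the defining data $(\alpha,\beta)$ of $\mathcal{B}$ along the projection $p:f^!\mathcal{A}\to\mathcal{A}$. We then check that the pulled-back pair is admissible defining data for $f^!\mathcal{B}\subset f^!\mathcal{A}$, and conclude using \autoref{prop:1} together with the independence statement \autoref{thm:0}.

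Concretely, fix a nowhere vanishing, locally decomposable $\alpha\in\Omega^q(\mathcal{A})$ defining $\mathcal{B}$, with $d_\mathcal{A}\alpha=\beta\wedge\alpha$. Set $\alpha'=f^!\alpha$ and $\beta'=f^!\beta$. The pullback $f^!:\Omega^\bullet(\mathcal{A})\to\Omega^\bullet(f^!\mathcal{A})$ is induced by a bundle map, so it respects wedge products. By \autoref{prop:1} it commutes with the differentials. Hence
\[d_{f^!\mathcal{A}}\alpha'=f^!(d_\mathcal{A}\alpha)=\beta'\wedge\alpha',\qquad \beta'\wedge(d_{f^!\mathcal{A}}\beta')^q=f^!\bigl(\beta\wedge(d_\mathcal{A}\beta)^q\bigr).\]
Once we know that $\alpha'$ is an admissible defining form for $f^!\mathcal{B}$, \autoref{thm:0} gives
\[\gv_{f^!\mathcal{A}}(f^!\mathcal{B})=[\beta'\wedge(d_{f^!\mathcal{A}}\beta')^q]=f^!\gv_\mathcal{A}(\mathcal{B}).\]

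The main step, and the only real obstacle, is to verify that $\alpha'$ defines $f^!\mathcal{B}$. This has three parts: $\alpha'$ is nowhere vanishing, it is locally decomposable, and $f^!\mathcal{B}$ is the common kernel of the $p^\ast\alpha_i$.

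Local decomposability is immediate: if $\alpha=\alpha_1\wedge\cdots\wedge\alpha_q$ on $U$, then $\alpha'=p^\ast\alpha_1\wedge\cdots\wedge p^\ast\alpha_q$ on $f^{-1}(U)$.

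For the kernel, a point $(v,X)\in f^!\mathcal{A}$ lies in $f^!\mathcal{B}=\mathcal{B}\times_{TM}TN$ exactly when $v\in\mathcal{B}$, that is, when $\alpha_i(v)=0$ for all $i$. So $f^!\mathcal{B}=\bigcap_i\ker p^\ast\alpha_i$.

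It remains to show that the $p^\ast\alpha_i$ are pointwise linearly independent, which is the same as showing that $\alpha'$ is nowhere vanishing. This is equivalent to the induced map
\[f^!\mathcal{A}/f^!\mathcal{B}\to(\mathcal{A}/\mathcal{B})_{f(x)}\]
being surjective, and it is here that transversality of $\mathcal{B}$ to $f$ is used. Given $v\in\mathcal{A}_{f(x)}$, write $\rho(v)=f_\ast X+\rho(b)$ with $b\in\mathcal{B}_{f(x)}$, which is possible because $f_\ast(TN)+\rho(\mathcal{B})=TM$. Then $(v-b,X)\in f^!\mathcal{A}$, and it maps to the class of $v$ modulo $\mathcal{B}$. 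Since the map is also injective by the kernel computation, it is an isomorphism. In particular $f^!\mathcal{B}$ has corank $q$, is coorientable, and $\alpha'$ is nowhere vanishing.

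With these checks done, the argument above completes the proof. Beyond this, only routine bookkeeping remains: confirming that the identity $d\circ f^!=f^!\circ d$ from \autoref{prop:1} holds for all sections (by $C^\infty(N)$-linearity and the Leibniz rule), and that $f^!$ is multiplicative on forms.
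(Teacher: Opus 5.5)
Your proposal is correct and follows the same route as the paper: pull back $\alpha$ and $\beta$ along the projection $f^!\mathcal{A}\to\mathcal{A}$, check that $f^!\alpha$ is a defining form for $f^!\mathcal{B}$, and conclude using \autoref{prop:1} together with the multiplicativity of $f^!$. Your surjectivity argument for $(f^!\mathcal{A})_x\to(\mathcal{A}/\mathcal{B})_{f(x)}$, which uses $f_\ast(TN)+\rho(\mathcal{B})=TM$, spells out the linear independence of the $f^!\alpha_i$; the paper only asserts this ``by the transversality condition''.
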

\begin{proof}
  Take a form $\alpha\in\Omega^q(\mathcal{A})$ which defines $\mathcal{B}$ and locally decompose it as $\alpha=\alpha_1\wedge\cdots\wedge\alpha_q$. By the transversality condition, $f^!\alpha_1,\cdots,f^!\alpha_q$ are linearly independent. Hence $f^!\alpha=f^!\alpha_1\wedge\cdots\wedge f^!\alpha_q$ is nowhere vanishing and we have 
  \[(f^!\mathcal{B})|_U=\bigcap_{i=1}^q \ker (f^!\alpha_i)\]
  on an open subset $U\subset N$.
  
  By \autoref{prop:1}, if $d_\mathcal{A}\alpha=\beta\wedge\alpha$, then $d_{f^!\mathcal{A}}(f^!\alpha)=f^!\beta\wedge f^!\alpha$ holds. Moreover, \autoref{prop:1} also implies that $(f^!\beta)\wedge (d_{f^!\mathcal{A}}(f^!\beta))^q=f^!(\beta\wedge (d_\mathcal{A}\beta)^q)$, and this complete the proof.
\end{proof}

\begin{remark}
  In the case $\mathcal{A}=TM$, the Godbillon--Vey class defined in this paper coincides with the classical Godbillon--Vey class of a foliation, and \autoref{thm:100} is nothing but the naturality of the classical Godbillon--Vey class.
\end{remark}

\section{Examples}\label{examples}

In this section, we present two examples for which the Godbillon--Vey class is nontrivial.

\subsection{Lie algebras}\label{subsection:4.1}

Let $\mathfrak{h}\subset\mathfrak{g}$ be a codimension one Lie subalgebra. We consider a Lie subalgebroid $\mathfrak{h}\to\{\ast\}$ of $\mathfrak{g}\to\{\ast\}$. The Lie algebroid cohomology reduces to the Chevalley--Eilenberg cohomology (\autoref{ex:1}). Assume that $\mathfrak{h}$ has codimension one, and choose
    $\alpha \in \mathfrak{g}^*$ such that $\ker \alpha = \mathfrak{h}$. In this case, one can take $\beta \in \mathfrak{g}^*$ such that
    $d_{\mathfrak{g}}\alpha = \beta \wedge \alpha$ as \[
    \beta(v) = -2\,\frac{\alpha([v,e])}{\alpha(e)}.
    \]
    Here $e$ is a basis of $\mathfrak{g}/\mathfrak{h}$, and $\beta$ is independent of the choice of $e$. Indeed, with respect to the direct sum decomposition
$\mathfrak g=\mathfrak h\oplus(\mathfrak g/\mathfrak h)$, write
$v\in\mathfrak g$ as
\[
v=u+ke \qquad (u\in\mathfrak h,\; k\in\mathbb R).
\]
Then
\[
\begin{split}
d_{\mathfrak g}\alpha(v_0,v_1)
&=-\alpha([u_0+k_0e,u_1+k_1e])\\
&=-\alpha(k_1[u_0,e]+k_0[e,u_1]).
\end{split}
\]
On the other hand,
\[
\begin{split}
(\beta\wedge\alpha)(v_0,v_1)
&=\frac{1}{2}\bigl(\beta(v_0)\alpha(v_1)-\beta(v_1)\alpha(v_0)\bigr)\\
&=-\frac{\alpha([v_0,e])}{\alpha(e)}k_1\alpha(e)
+\frac{\alpha([v_1,e])}{\alpha(e)}k_0\alpha(e)\\
&=-\alpha([u_0,e])k_1+\alpha([u_1,e])k_0.
\end{split}
\]
Since
$$
d_{\mathfrak g}\beta(v_0,v_1)
=
-\beta([v_0,v_1])
=
\frac{2\alpha([[v_0,v_1],e])}{\alpha(e)},
$$
we obtain

\[
\begin{split}
(\beta\wedge d_{\mathfrak g}\beta)(v_0,v_1,v_2)
&=
-\frac13\bigl(
\beta(v_0)\beta([v_1,v_2])
+\text{cyclic permutations}
\bigr)\\
&=
-\frac13\left(
\frac{2\alpha([v_0,e])}{\alpha(e)}
\cdot
\frac{2\alpha([[v_1,v_2],e])}{\alpha(e)}
+\text{cyclic permutations}
\right)\\
&=
-\frac{4}{3\alpha(e)^2}
\bigl(
\alpha([v_0,e])\alpha([[v_1,v_2],e])
+\text{cyclic permutations}
\bigr).
\end{split}
\]

\vspace{0.1in}
For instance, let
$\mathfrak g=\mathfrak{sl}_2\mathbb R$
and let
$(H,X,Y)$
be a basis satisfying
\[
[H,X]=2X,\qquad
[H,Y]=-2Y,\qquad
[X,Y]=H.
\]
Consider the Borel subalgebra
$\mathfrak h=\operatorname{span}\langle H,X\rangle$.
Then
$\mathfrak{sl}_2\mathbb R/\mathfrak h
\simeq
\operatorname{span}\langle Y\rangle$.
Choose
$\alpha\in(\mathfrak{sl}_2\mathbb R)^*$
such that
\[
\alpha(H)=\alpha(X)=0,
\qquad
\alpha(Y)=1.
\]
Writing
$v_i=h_iH+x_iX+y_iY$
$(i=0,1,2)$,
we have
\[\begin{split} 
  \alpha[v_0,Y]\alpha[[v_1,v_2],Y]&=\alpha(-2h_0Y+x_0 H)\alpha([h_1x_2[H,X]+x_1h_2[X,H]\\
  &\qquad+y_1[Y,h_2H+x_2X]+y_2[h_1H+x_2X,Y],Y])\\ 
  &=-2h_0\alpha([2(h_1x_2-x_1h_2)X+y_1(2h_2Y-x_2H)+y_2(-2h_1Y+x_1H),Y])\\ 
  &=-2h_0\alpha([(x_1y_2-x_2y_1)H,Y])=-2h_0(-2(x_1y_2-x_2y_1))\\ 
  &=4h_0(x_1y_2-x_2y_1).
\end{split}\]
Hence, we obtain
\[
(\beta\wedge d_{\mathfrak{sl}_2\mathbb R}\beta)
(v_0,v_1,v_2)
=
-\frac{16}{3}
\bigl(
h_0(x_1y_2-x_2y_1)
+\text{cyclic permutations}
\bigr).
\]
One can verify that this $3$-form is not
$d_{\mathfrak{sl}_2\mathbb R}$-exact.
Hence the Godbillon--Vey class of
$\mathfrak h\subset\mathfrak{sl}_2\mathbb R$
is nontrivial.
Indeed, it is a generator of $H^3(\mathfrak{sl}_2\mathbb R)
\cong
\mathbb R$.

\subsection{Action Lie algebroids}

Let $\mathfrak{g}$ be a Lie algebra acting on a manifold $M$, and let $\mathcal{A}=M\ltimes\mathfrak{g}\to M$ be the corresponding action Lie algebroid. Let $\mathfrak{h}\subset\mathfrak{g}$ be a Lie subalgebra, and set
\[
  \mathcal{B}=M\ltimes\mathfrak{h}\subset \mathcal{A}.
\]
Then $\mathcal{B}$ is a Lie subalgebroid of $\mathcal{A}$ (the quotient $\mathcal{A}/\mathcal{B}$ is trivial). Assume that $x\in M$ is a fixed point of the action, that is,
\[
  \varphi(v)_x=0
  \qquad
  \text{for all }v\in\mathfrak{g}.
\]

\begin{proposition}
  The evaluation map at $x$
\[
  \operatorname{ev}_x:
  \Omega^\bullet(\mathcal{A})
  =
  C^\infty(M)\otimes \Lambda^\bullet\mathfrak{g}^*
  \longrightarrow
  \Lambda^\bullet\mathfrak{g}^*
\]
defined by $\operatorname{ev}_x(\eta)=\eta(x)$ is a cochain map.
\end{proposition}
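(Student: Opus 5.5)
The plan is to compare the two differentials term by term, using the explicit Chevalley--Eilenberg description of $d_\mathcal{A}$ for the action algebroid given in the example above. Under the identification $\Omega^k(\mathcal{A})\cong C^\infty(M)\otimes\Lambda^k\mathfrak{g}^*$, a form $\eta$ is a smooth map $M\to\Lambda^k\mathfrak{g}^*$. It therefore suffices to evaluate on constant sections $v_0,\ldots,v_k\in\mathfrak{g}$, since both sides are $C^\infty(M)$-multilinear once evaluated pointwise. The map $\operatorname{ev}_x$ is clearly linear, and the target complex is $(\Lambda^\bullet\mathfrak{g}^*,d_\mathfrak{g})$ from \autoref{ex:1}.

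\emph{Step 1.} Write out
\[
(d_\mathcal{A}\eta)(v_0,\ldots,v_k)=\sum_i(-1)^i\varphi(v_i)\bigl(\eta(v_0,\ldots,\hat v_i,\ldots,v_k)\bigr)+\sum_{i<j}(-1)^{i+j}\eta([v_i,v_j]_\mathfrak{g},v_0,\ldots,\hat v_i,\ldots,\hat v_j,\ldots,v_k),
\]
and evaluate both sides at $x$.

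\emph{Step 2.} Each term of the first sum is the derivative of a smooth function along the vector field $\varphi(v_i)$, evaluated at $x$. This equals $\varphi(v_i)_x$ applied to the differential of that function at $x$, which vanishes because $x$ is a fixed point.

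\emph{Step 3.} In the second sum, the bracket of constant sections is the constant section $[v_i,v_j]_\mathfrak{g}$; the Lie derivative terms in the action-algebroid bracket vanish for constant maps. Evaluating at $x$ therefore gives exactly $(d_\mathfrak{g}\,\eta(x))(v_0,\ldots,v_k)$, so $\operatorname{ev}_x\circ d_\mathcal{A}=d_\mathfrak{g}\circ\operatorname{ev}_x$.

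There is no serious obstacle. The only care needed is to justify that testing on constant sections suffices, which holds because $\Omega^\bullet(\mathcal{A})$ consists of pointwise multilinear forms and constant sections span each fiber. A second, conceptual route is also available: at a fixed point the inclusion of the fiber $\mathfrak{g}\to\{x\}$ into $\mathcal{A}$ is a Lie algebroid morphism, since the anchor vanishes there. Then $\operatorname{ev}_x$ is its pullback, which is the pullback $i^!$ along $i:\{x\}\to M$ in the sense of the earlier Remark. The direct computation above is simpler, and it is the one I would give.
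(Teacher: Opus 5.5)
Your proof is correct and is essentially the paper's argument: test on constant sections, note that the anchor terms vanish because $\varphi(v_i)_x=0$, and recognize the remaining bracket terms as $d_{\mathfrak g}(\eta(x))$. One caveat on your aside: the conceptual alternative should not be phrased via the paper's pullback $i^!$, because the transversality hypothesis $i_\ast(T\{x\})+\rho(\mathcal{A}_x)=T_xM$ fails at a fixed point (unless $\dim M=0$), and under the paper's definition, calling the inclusion a Lie algebroid morphism is exactly the claim being proved; the direct computation you chose needs neither.
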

\begin{proof}
  For $\eta\in\Omega^k(\mathcal{A})$ and
$v_0,\ldots,v_k\in\mathfrak{g}$, we compute using the constant sections
of $\mathcal{A}$ determined by $v_0,\ldots,v_k$.  The Lie algebroid
differential of the action Lie algebroid is given by
\[
\begin{aligned}
(d_{\mathcal{A}}\eta)(v_0,\ldots,v_k)
&=
\sum_{i=0}^k (-1)^i
\varphi(v_i)
\bigl(
\eta(v_0,\ldots,\widehat{v_i},\ldots,v_k)
\bigr)
\\
&\quad+
\sum_{0\leq i<j\leq k}
(-1)^{i+j}
\eta([v_i,v_j]_{\mathfrak{g}},
v_0,\ldots,\widehat{v_i},\ldots,\widehat{v_j},\ldots,v_k).
\end{aligned}
\]
Evaluating this at the fixed point $x$, the first summation vanishes because
\[
  \varphi(v_i)_x=0
  \qquad
  (i=0,\ldots,k).
\]
Hence
\[
\begin{aligned}
\operatorname{ev}_x(d_{\mathcal{A}}\eta)(v_0,\ldots,v_k)
&=
\sum_{0\leq i<j\leq k}
(-1)^{i+j}
\eta(x)([v_i,v_j]_{\mathfrak{g}},
v_0,\ldots,\widehat{v_i},\ldots,\widehat{v_j},\ldots,v_k)
\\
&=
(d_{\mathfrak{g}}\operatorname{ev}_x(\eta))(v_0,\ldots,v_k).
\end{aligned}
\]
\end{proof}

Therefore $\operatorname{ev}_x$ induces a homomorphism
\[
  \operatorname{ev}_x:
  H^\bullet(\mathcal{A})
  \longrightarrow
  H^\bullet(\mathfrak{g}),
\]
where the right-hand side is the Chevalley--Eilenberg cohomology of
$\mathfrak{g}$. Moreover, the evaluation map preserves wedge products:
\[
  \operatorname{ev}_x(\eta_1\wedge\eta_2)
  =
  \operatorname{ev}_x(\eta_1)\wedge \operatorname{ev}_x(\eta_2).
\]
Suppose that $\mathcal{B}\subset\mathcal{A}$ has corank $q$ and let $\alpha\in\Omega^q(\mathcal{A})$ be a defining form of $\mathcal{B}$. Take $\beta\in\Omega^1(\mathcal{A})$ such that $d_{\mathcal{A}}\alpha=\beta\wedge\alpha$. Evaluating this equation at the fixed point $x$, we obtain
\[
  d_{\mathfrak{g}}\operatorname{ev}_x(\alpha)
  =
  \operatorname{ev}_x(\beta)\wedge \operatorname{ev}_x(\alpha).
\]
Thus $\operatorname{ev}_x(\alpha)$ is a defining form for
$\mathfrak{h}\subset\mathfrak{g}$, and $\operatorname{ev}_x(\beta)$ is a
corresponding $1$-form in the definition of the Godbillon--Vey class for
the Lie subalgebroid $(\mathfrak{h}\to\{\ast\})\subset(\mathfrak{g}\to\{\ast\})$. Then it follows that
\[
\begin{aligned}
\operatorname{ev}_x
\left(
  \beta\wedge (d_{\mathcal{A}}\beta)^q
\right)
&=
\operatorname{ev}_x(\beta)
\wedge
\left(
  d_{\mathfrak{g}}\operatorname{ev}_x(\beta)
\right)^q.
\end{aligned}
\]
Consequently, we obtain
\[
  \operatorname{ev}_x
  \bigl(
    \operatorname{GV}_{\mathcal{A}}(\mathcal{B})
  \bigr)
  =
  \operatorname{GV}_{\mathfrak{g}}(\mathfrak{h})
  \in H^{2q+1}(\mathfrak{g}).
\]

\vspace{0.2in}
Let $\mathfrak{h}\subset\mathfrak{sl}_2\mathbb{R}$ be as in \autoref{subsection:4.1}. Consider the standard linear action of $\mathfrak{sl}_2\mathbb{R}$ on $M=\mathbb{R}^2$. This action defines the action Lie algebroid
\[
  \mathcal{A}=\mathbb{R}^2\ltimes\mathfrak{sl}_2\mathbb{R}
  \longrightarrow \mathbb{R}^2.
\]
The anchor is given by
\[
  \rho((u,w),A)=A(u,w)^T.
\]
Since $\mathfrak{h}\subset\mathfrak{sl}_2\mathbb{R}$ is a Lie subalgebra,
\[
  \mathcal{B}=\mathbb{R}^2\ltimes\mathfrak{h}
  \subset \mathcal{A}
\]
is a Lie subalgebroid. The origin $\boldsymbol{0}=(0,0)\in\mathbb{R}^2$ is a fixed point of the standard action. Hence the evaluation map
\[
  \operatorname{ev}_{\boldsymbol{0}}:\Omega^\bullet(\mathcal{A})
  =
  C^\infty(\mathbb{R}^2)\otimes\Lambda^\bullet(\mathfrak{sl}_2\mathbb{R})^*
  \longrightarrow
  \Lambda^\bullet(\mathfrak{sl}_2\mathbb{R})^*
\]
is a cochain map. Therefore we have 
\[
  \operatorname{ev}_{\boldsymbol{0}}\bigl(\operatorname{GV}_{\mathcal{A}}(\mathcal{B})\bigr)
  =
  \operatorname{GV}_{\mathfrak{sl}_2\mathbb{R}}(\mathfrak{h}).
\]
Since $\operatorname{GV}_{\mathfrak{sl}_2\mathbb{R}}(\mathfrak{h})\neq0$, we obtain $\operatorname{GV}_{\mathcal{A}}(\mathcal{B})\neq0$.

\vspace{0.1in}
The anchor image of $\mathcal{B}$ does not have constant rank. Indeed, for $(u,w)\in\mathbb{R}^2$, we have
\[
  H(u,w)^T=(u,-w)^T,
  \qquad
  X(u,w)^T=(w,0)^T.
\]
Hence
\[
  \rho(\mathcal{B})_{(u,w)}
  =
  \operatorname{span}\{(u,-w),(w,0)\}.
\]
Therefore
\[
  \dim \rho(\mathcal{B})_{(u,w)}
  =
  \begin{cases}
    2 & \quad\textrm{if}\ \ w\neq0,\\
    1 & \quad\textrm{if}\ \ u\neq0,\\
    0 & \quad\textrm{if}\ \ (u,w)=(0,0).
  \end{cases}
\]Thus $\mathcal{B}\subset \mathcal{A}$ has a nontrivial Godbillon--Vey class, while its anchor image defines a singular foliation.

\section{Further study}

The Godbillon--Vey class is a special case of secondary characteristic classes \cite{bott1972lectures}. A natural direction for future research is to extend the theory of secondary characteristic classes to Lie subalgebroids.

\vspace{0.1in}
A submanifold $N$ of a Poisson manifold $M$ is called a \emph{coregular Poisson--Dirac submanifold} if
\begin{itemize}
\item[(1)] $TN\cap \pi^\sharp((TN)^\circ)=\{0\}$,
\item[(2)] $\pi^\sharp((TN)^\circ)$ has constant rank.
\end{itemize}
For a coregular Poisson--Dirac submanifold $i:N\hookrightarrow M$, the pullback Lie algebroid
$
\mathcal{A}_N:=i^!(T^\ast M)\to N
$
is defined. Another direction for future research is to find a Lie subalgebroid
$\mathcal{B}_N\subset\mathcal{A}_N$ such that the Godbillon--Vey class
$\gv_{\mathcal{A}_N}(\mathcal{B}_N)$ is nontrivial.

\vspace{0.1in}
A geometric structure closely related to the Poisson structure is the so-called \textit{Jacobi structure}. A Jacobi structure on a manifold is defined by a pair of a vector field $E$ and a bivector field $\pi$ such that 

\[[\pi,\pi]_S=2E\wedge\pi,\qquad [\pi,E]_S=0.\]
The notion of a Jacobi manifold is a natural generalization of that of a Poisson manifold. It is known that, just as a Poisson manifold induces a Lie algebroid, a Jacobi manifold also induces a Lie algebroid (see \cite{vaisman2000bv}). The singular foliation determined by this Lie algebroid consists of odd-dimensional leaves equipped with contact structures and even-dimensional leaves equipped with locally conformal symplectic structures. In paper \cite{yonehara2024godbillon}, an explicit expression for the Godbillon--Vey class was obtained in the case where this foliation arising from a Jacobi manifold is regular. It would also be interesting to investigate the relationship between the classical Godbillon--Vey class and the Godbillon--Vey class introduced in this paper for Poisson and Jacobi manifolds with regular foliations.

\vspace{0.2in}
\bibliography{hoge} 
\bibliographystyle{alpha}

\end{document}